\documentclass{amsart}
\usepackage{graphicx}
\vfuzz2pt 
\hfuzz2pt 
\newtheorem{thm}{Theorem}[section]

\newtheorem{lem}[thm]{Lemma}

\theoremstyle{definition}

\theoremstyle{remark}
\newtheorem{rem}[thm]{Remark}

\numberwithin{equation}{section}

\begin{document}

\title[Losses in $GI^X/GI^Y/1/n$ queues]
{Characterization theorem on losses in $GI^X/GI^Y/1/n$ queues}%
\author{{\bf Vyacheslav M. Abramov},\\
\textit{Swinburne University of Technology}
}%
\footnote{Vyacheslav M. Abramov, Center for Advanced Internet
Architectures, Faculty of Information and Communication
Technologies, Swinburne University of Technology, John Street, PO
Box 218, Hawthorn, Victoria 3122, Australia, email:
vabramov126@gmail.com}
%
\subjclass{60K25, 90B22}%
\keywords{Loss systems; busy period; batch-arrival and batch
service; Wald's identity; NWUE class of distributions}%
%
\begin{abstract}
In this paper, we prove a characterization theorem on the number of
losses during a busy period in $GI^X/GI^Y/1/n$ queueing systems, in
which the interarrival time distribution belongs to the class NWUE.
\end{abstract}
\maketitle
%
%

\section{Introduction}
\noindent There are several papers that study the properties of
losses from queues during their busy periods. In \cite{Abramov 1997}
and then in \cite{Righter 1999} and \cite{Wolff 2002}, it was proved
that in $M/GI/1/n$ queueing systems, in which the expectations of
interarrival and service times are equal, the expected number of
losses during their busy periods is equal to 1 for all $n$. In
\cite{Abramov 2001b}, this result was extended for $M^X/GI/1/n$
queues. In \cite{Abramov 2001a} and \cite{Abramov 2006} some
stochastic inequalities connecting the number of losses during busy
periods in $M/GI/1/n$ and $GI/M/1/n$ queues and the number of
offspring in Galton-Watson branching processes were obtained, and in
\cite{Wolff 2002}, for $GI/GI/1/n$ queueing systems in which
interarrival time distribution belongs to the class NBUE or NWUE,
simple inequalities for the expected number of losses during a busy
period were obtained. Pek\"oz, Righter and Xia \cite{Pekoz Righter
Xia 2003} gave a characterization of the number of losses during a
busy period of $GI/M/1/n$ queueing systems. They proved that if the
expected number of losses during a busy period is equal to 1 for all
$n$, then arrivals must be Poisson.

In the present paper, we prove a characterization theorem for the
expected number of losses during a busy period for the class of
$GI^X/GI^Y/1/n$ queueing systems, in which interarrival time
distribution belongs to the class NWUE.

Recall that the probability distribution function of a random
variable $\xi$ is said to belong to the class NBUE if for any
$x\geq0$ the inequality $\mathsf{E}\{\xi-x|\xi>x\}\leq\mathsf{E}\xi$
holds. If the opposite inequality holds, i.e.
$\mathsf{E}\{\xi-x|\xi>x\}\geq\mathsf{E}\xi$, then the probability
distribution function of a random variable $\xi$ is said to belong
to the class NWUE.

The queueing system $GI^X/GI^Y/1/n$, where the symbols $X$ and $Y$
denote an arrival batch and, respectively, service batch, is
characterized by parameter $n$ and four (control) sequences
$\{\tau_i, X_i, \chi_i, Y_i\}$ of random variables ($i=1,2,\ldots$),
each of which consists of independently and identically distributed
random variables, and these sequences are independent of each other
(e.g. see \cite{Borovkov 1976}). Let $A(x)=\mathsf{P}\{\tau_i\leq
x\}$ denote the interarrival time probability distribution function,
$a=\int_0^\infty x\mathrm{d}A(x)<\infty$, and let
$B(x)=\mathsf{P}\{\chi_i\leq x\}$ denote the service time
probability distribution function, $b=\int_0^\infty
x\mathrm{d}B(x)<\infty$. The random variables $X_1$, $X_2$,\ldots
denote consecutive masses of arriving units or, in known
terminology, their batch sizes; but here they are assumed to be
positive real-valued random variables rather than integer-valued. In
turn, the random variables $Y_1$, $Y_2$,\ldots denote consecutive
service masses or service batches, which are also assumed to be
positive real-valued random variables. The $i$th service batch $Y_i$
characterizes the quantity that can be processed during the $i$th
service time given that the necessary quantity is available in the
system immediately before the $i$th service. Both $X_1$ and $Y_1$
are assumed to have finite expectations. In addition, the capacity
of the system $n$ is assumed to be a positive real number in
general.

The queueing systems with real-valued batch arrival and service, as
well as with real-valued capacity are not traditional. They can be
motivated, however, in industrial applications, where units can be
lorries with sand or soil arriving in and departing from a storage
station. In usual queueing formulations, where the random variables
$X_i$ and $Y_i$ are integer-valued, they are characterized as
batches of arrived and served customers. The main result of the
present paper, Theorem \ref{thm3}, is new even in the particular
case when $X_i=Y_i=1$ and $n$ is integer.

\begin{thm}\label{thm3}  Let $M_L$
 denote the total mass lost during a busy period.
Assume that the probability distribution function $A(x)$ belongs to
the class NWUE. Then the equality $\mathsf{E}M_L=\mathsf{E}X_1$
holds for all positive real $n$ if and only if arrivals are Poisson,
the random variable $Y_1$ takes a single value $d$, the probability
distribution function of $X_1$ is lattice with span $d$, and
$\mathsf{E}X_1=\frac{ad}{b}$.
\end{thm}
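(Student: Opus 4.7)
My plan is to start from the path-level mass balance for the busy period,
\[
M_L = \sum_{i=1}^{\nu}X_i - \sum_{j=1}^{\eta}Y_j,
\]
where $\nu$ is the number of batch arrivals and $\eta$ the number of service completions during a generic busy period. Taking expectations and applying Wald's identity yields $\mathsf{E}M_L = \mathsf{E}\nu\,\mathsf{E}X_1 - \mathsf{E}\eta\,\mathsf{E}Y_1$, while standard identities for the busy-period length give $\mathsf{E}T_B = b\,\mathsf{E}\eta$. A renewal-theoretic argument relating $\mathsf{E}\nu$ to $\mathsf{E}T_B/a$, fed by the NWUE estimate $\mathsf{E}\{\tau-x\mid\tau>x\}\geq a$ applied to the residual interarrival time at $T_B$, should produce a one-sided scalar inequality between $\mathsf{E}M_L$ and $\mathsf{E}X_1$ which must be tight under the hypothesis of the theorem.

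For sufficiency, suppose arrivals are Poisson with rate $1/a$, $Y_1\equiv d$, $X_1$ is lattice with span $d$, and $\mathsf{E}X_1 = ad/b$. Because $X_1$ and $Y_1$ are both supported on the lattice $\{0,d,2d,\ldots\}$, the workload measured in units of $d$ evolves as an integer-valued Markov chain, and the system reduces to an $M^X/GI/1/\lfloor n/d\rfloor$ queue with integer batches. The identity $\mathsf{E}X_1/a = d/b$ is the critical-load condition $\rho=1$, and $\mathsf{E}M_L = \mathsf{E}X_1$ then follows from the classical result of \cite{Abramov 2001b}.

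The necessity direction is where the real content lies. First, the hypothesis $\mathsf{E}M_L=\mathsf{E}X_1$ forces the NWUE estimate above to be tight on the support of the residual interarrival time, and a standard characterization of the exponential law through its memoryless property then implies that arrivals are Poisson. Having reduced to $M^X/GI^Y/1/n$ with Poisson arrivals, one must exploit the fact that equality is imposed for \emph{every} positive real $n$: the variable $M_L$ depends on $n$ through the overshoot at the moment of loss, and a careful analysis of how $\mathsf{E}M_L$ varies with $n$ should force the atoms of $Y_1$ and $X_1$ to align on a common lattice with $Y_1 = d$ constant and $X_1$ supported on $\{0,d,2d,\ldots\}$; the balance $\mathsf{E}X_1 = ad/b$ then drops out of a final application of Wald's identity. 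The main obstacle, analogous to the $GI/M/1/n$ characterization of Pek\"oz, Righter and Xia \cite{Pekoz Righter Xia 2003}, is precisely the extraction of this lattice structure from the continuous family of equality constraints parametrized by $n$, since this is where arbitrary service-batch randomness and off-lattice arrival mass must be ruled out.
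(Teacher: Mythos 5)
Your skeleton (mass balance over a busy cycle, Wald's identity, the NWUE bound $\mathsf{E}I\geq a$ on the idle/residual interarrival time, then tightness) is exactly the paper's, and your sufficiency argument via reduction to an $M^X/GI/1/\lfloor n/d\rfloor$ queue at critical load and the known result of \cite{Abramov 2001b} is a legitimate (arguably cleaner) alternative to the paper's. But there is a genuine error at the very first line of your plan: the identity $M_L=\sum_{i=1}^{\nu}X_i-\sum_{j=1}^{\eta}Y_j$ is false. The mass removed by the $j$th service is not $Y_j$ but the minimum of $Y_j$ and the workload present just before that service; in particular the last service of a busy period is generically truncated. So Wald cannot be applied to the served mass directly, and the correct statement is only $\mathsf{E}M_S\leq\mathsf{E}N_S\,\mathsf{E}Y_1$ (the paper proves this via a conditional-independence argument and the Kolmogorov--Prohorov lemma, conditioning on the pre-service workloads). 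This is not a technicality: the paper shows the inequality is \emph{strict} whenever $Y_1$ is nondegenerate (using $\overline{A}(x)>0$, a consequence of NWUE), and it is precisely the tightness of this inequality that forces $Y_1\equiv d$ and $X_1$ lattice with span $d$ --- the service batch is never truncated only if the workload always sits on the lattice $d\mathbb{Z}$. By writing the served mass as $\sum Y_j$ you erase this mechanism, and you are then forced to extract the lattice structure from a vague ``analysis of how $\mathsf{E}M_L$ varies with $n$,'' which you yourself flag as the unresolved ``main obstacle.'' In other words, the hardest step of the necessity direction is exactly the one you have not supplied, and your setup has discarded the tool that delivers it.

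Two further gaps: (i) the chain of inequalities only yields $\mathsf{E}M_L\geq\mathsf{E}X_1$ under the side condition $\frac{1}{a}\mathsf{E}X_1\geq\frac{1}{b}\mathsf{E}Y_1$; the subcritical case must be excluded separately, which the paper does by noting that $\mathsf{E}M_L\to 0$ as $n\to\infty$ by the law of large numbers, and which you do not address even though the hypothesis ``for all positive real $n$'' is what makes this exclusion work. (ii) Your Poisson step (tightness of the mean-residual-life bound plus memorylessness) is at the same level of detail as the paper's and is acceptable, but note that it must be argued that equality of the mean residual life holds on a rich enough set of elapsed times, which again uses the freedom in $n$. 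As it stands, the proposal establishes the easy direction and the framework for the lower bound, but not the characterization itself.
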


This theorem is true for full and partial rejection policies,
work-conserving disciplines and can be adapted to different models
considered, for instance, in \cite{Wolff 2002}. The characterization
theorem is a necessary and sufficient condition that includes the
case of the Poisson arrivals. However, if arrivals are not Poisson
but belong to the practically important class NWUE, then in the case
where $ \frac{1}{a}\mathsf{E}X_1\geq\frac{1}{b}\mathsf{E}Y_1$, i.e.
in the case where the total mass of arrivals per unit time is not
smaller than the total mass of service per unit time, we have the
simple inequality given by Lemma \ref{thm2} that is used to prove
Theorem \ref{thm3}.

\section{Proof of the main result}
\noindent

\noindent We start from the following lemma.

\begin{lem}\label{thm2}
Assume that the length of a busy period has a finite mean, and that
the probability distribution function $A(x)$ belongs to the class
NWUE. If $ \frac{1}{a}\mathsf{E}X_1\geq\frac{1}{b}\mathsf{E}Y_1$ and
$\mathsf{P}\{X_1\leq n\}>0$, then for any nontrivial random variable
$Y_1$ (i.e. taking at least two positive values) we have
$\mathsf{E}M_L>\mathsf{E}X_1$.
\end{lem}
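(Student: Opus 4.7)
The plan is a conservation-of-mass argument, combined with three Wald identities and the defining overshoot inequality of the NWUE class.

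Let $T$ denote the busy period length, $\nu$ the number of arrival batches in $[0,T]$, and $\sigma$ the number of services completed in $[0,T]$. Since the busy period begins and ends with an empty system, $M_L=\sum_{i=1}^{\nu}X_i-M_S$; and each service removes at most its nominal amount, so $M_S\le\sum_{i=1}^{\sigma}Y_i$ with deficit $\sum_{i=1}^{\sigma}(Y_i-M_i)^+$, where $M_i$ is the mass present at the start of the $i$-th service. Because the server works continuously during a busy period, $T=\sum_{i=1}^{\sigma}\chi_i$. In the natural filtration built from the four independent control sequences, $\nu$ and $\sigma$ are stopping times, so Wald's identity yields
\[
\mathsf{E}T=b\,\mathsf{E}\sigma,\qquad \mathsf{E}\!\sum_{i=1}^{\nu}X_i=\mathsf{E}X_1\cdot\mathsf{E}\nu,\qquad \mathsf{E}\!\sum_{i=1}^{\sigma}Y_i=\mathsf{E}Y_1\cdot\mathsf{E}\sigma.
\]

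The decisive NWUE input is the lower bound $\mathsf{E}\nu\ge \mathsf{E}T/a+1$. Writing $S_k=\tau_1+\cdots+\tau_k$ one has $S_{\nu-1}\le T<S_\nu$, and by Wald $\mathsf{E}S_\nu=a\,\mathsf{E}\nu$. I would decompose $\{\nu=k\}=\{\nu\ge k\}\cap\{\tau_k>Z_k\}$, where $Z_k=T^{(k)}-S_{k-1}$ is the residual busy period computed from the first $k$ arrivals only, and therefore independent of $\tau_k$. The NWUE property in the equivalent form $\int_z^{\infty}(1-A(s))\,\mathrm{d}s\ge a(1-A(z))$, applied conditionally on $Z_k$, gives $\mathsf{E}\bigl[(\tau_k-Z_k)\mathbf{1}_{\tau_k>Z_k}\mid Z_k\bigr]\ge a\,\mathsf{P}(\tau_k>Z_k\mid Z_k)$; multiplying by $\mathbf{1}_{\nu\ge k}$, taking expectations, and summing on $k$ produces $\mathsf{E}(S_\nu-T)\ge a$, whence the bound on $\mathsf{E}\nu$ follows.

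Substituting into $\mathsf{E}M_L=\mathsf{E}X_1\cdot\mathsf{E}\nu-\mathsf{E}M_S$ and using the hypothesis $\mathsf{E}X_1/a\ge\mathsf{E}Y_1/b$,
\[
\mathsf{E}M_L\;\ge\;\mathsf{E}X_1\cdot\mathsf{E}\nu-\mathsf{E}Y_1\cdot\mathsf{E}\sigma\;\ge\;\mathsf{E}T\!\left(\frac{\mathsf{E}X_1}{a}-\frac{\mathsf{E}Y_1}{b}\right)+\mathsf{E}X_1\;\ge\;\mathsf{E}X_1.
\]
To upgrade this to a strict inequality under $\mathsf{P}\{X_1\le n\}>0$ with nontrivial $Y_1$, I would show $\mathsf{E}\sum_i(Y_i-M_i)^+>0$ by exhibiting a positive-probability sample path on which some service wastes capacity — for instance, the first batch fits, no arrival interrupts a stretch of services, and some $Y_i$ eventually realises a value exceeding the remaining mass — which makes $\mathsf{E}M_S<\mathsf{E}Y_1\,\mathsf{E}\sigma$ strictly, and hence $\mathsf{E}M_L>\mathsf{E}X_1$.

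The principal obstacle will be rigorising the NWUE overshoot step: choosing a filtration in which $\nu$ is truly a stopping time, verifying the factorisation $\{\nu=k\}=\{\nu\ge k\}\cap\{\tau_k>Z_k\}$ together with the independence $Z_k\perp\tau_k$, and confirming the finiteness $\mathsf{E}\nu,\mathsf{E}\sigma<\infty$ that permits the Wald manipulations under the hypothesis $\mathsf{E}T<\infty$. The concluding nontriviality argument is conceptually simple but requires a short sample-path construction using $\mathsf{P}\{X_1\le n\}>0$ and the two distinct values taken by $Y_1$.
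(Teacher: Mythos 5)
Your argument is correct and its skeleton coincides with the paper's: Wald's identity for the arrived mass, the cycle-length identity $a\mathsf{E}N_A=b\mathsf{E}N_S+\mathsf{E}I$, the NWUE bound $\mathsf{E}I\geq a$ (which you re-derive by a conditional overshoot computation instead of citing Wolff's book), and finally mass conservation $\mathsf{E}M_L=\mathsf{E}M_A-\mathsf{E}M_S$. The one place you genuinely deviate is the bound $\mathsf{E}M_S\leq\mathsf{E}Y_1\mathsf{E}N_S$: the paper stresses that Wald cannot be applied directly to the actually served amounts (they are not independent) and instead invokes the Kolmogorov--Prohorov lemma for the served masses, which are conditionally independent given the workloads $m_1,m_2,\ldots$; you sidestep this entirely via the pathwise domination $M_S\leq\sum_{i=1}^{\sigma}Y_i$ (each service removes $\min(Y_i,M_i)\leq Y_i$) together with ordinary Wald applied to the i.i.d.\ nominal batches $Y_i$, carrying the deficit $\sum_i(Y_i-M_i)^+$ explicitly. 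This is simpler for the weak inequality, and it localises the strictness question to showing $\mathsf{E}\sum_i(Y_i-M_i)^+>0$, which is morally the same as the paper's (equally terse) claim that $\mathsf{E}\{S_{j_0}\mid N_S=j_0\}<\mathsf{E}Y_1$ for some $j_0$. Two details to nail down in your concluding sample-path construction: the ``uninterrupted stretch of services'' has positive probability only because NWUE forces $\overline{A}(x)=1-A(x)>0$ for every $x$ (this is precisely where the paper uses NWUE a second time, and you should say so explicitly), and you must handle the possibility that the residual mass steps through exact multiples of the chosen service value --- for instance, run services at the larger value $y_2$ until the residual lies in $(0,y_2]$, and if it equals $y_2$ insert one service at the smaller value $y_1$ so that the next service at $y_2$ necessarily wastes capacity.
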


\begin{proof}
Let $N_A$ denote the total number of arrivals during a busy cycle
(that is, total number of arrivals during a busy period plus the
unit that starts the busy period), and let $N_S$ denote the total
number of service completions during the busy period. Denoting by
$M_A$ the total mass of arrivals during a busy cycle, and,
respectively, by $M_S$ the total mass of served units during a busy
period. Using Wald's identity, we have:
\begin{eqnarray}
\mathsf{E}M_A&=&\mathsf{E}X_1\mathsf{E}N_A ,\label{eq1}\\
a\mathsf{E}N_A&=&b\mathsf{E}N_S+\mathsf{E}I,\label{eq2}
\end{eqnarray}
where $I$ in Equation \eqref{eq2} denotes the length of idle time.
Since $A(x)$ belongs to the class NWUE, then $\mathsf{E}I\geq a$
(see \cite{Wolff book}, p.482). Hence, from \eqref{eq2} we have
\begin{equation}\label{eq3}
a\mathsf{E}N_A-a\geq b\mathsf{E}N_S.
\end{equation}

Note, that for $M_S$ we cannot use Wald's identity directly in order
to show that $\mathsf{E}M_S\leq\mathsf{E}Y_1\mathsf{E}N_S$.
In order to prove this inequality, we introduce the sequence of
random variables ${S_1}$, ${S_2}$,\ldots that characterizes
\textit{real} masses of service or, in other words, \textit{real}
batch sizes of service satisfying the properties
$\mathsf{E}\{{S_j}|N_S=j\}\leq\mathsf{E}Y_1$ while
$\mathsf{E}\{{S_i}|N_S=j\}=\mathsf{E}Y_1$ for $i<j$ ($1\leq i<j$).
Apparently ${S_1}$, ${S_2}$,\ldots are \textit{not} independent
random variables. So, additional properties of the sequence ${S_1}$,
${S_2}$,\ldots are needed in order to establish the inequality for
$\mathsf{E}M_S$.

Let $m_i$ denote the workload of the system immediately before the
service of the $i$th unit starts. Then, given $\{m_1=x_1$, $m_2=x_2,
\ldots\}$ $(x_i\leq n$ for all $i$), the sequence ${S_1}$,
${S_2}$,\ldots is conditionally independent. Hence, under the
condition $\{m_1=x_1$, $m_2=x_2, \ldots\}$ for the sequence of
conditionally independent random variables ${S_1}$, ${S_2}$,\ldots
one can use the following theorem by Kolmogorov and Prohorov
\cite{Kolmogorov and Prohorov}.

\begin{lem}\label{lem1} (Kolmogorov and Prohorov \cite{Kolmogorov and Prohorov}.)
Let $\xi_1$, $\xi_2$,\ldots be independent random variables, and let
$\nu$ be an integer random variable such that the event $\{\nu=k\}$
is independent of $\xi_{k+1}$, $\xi_{k+2}$,\ldots. Assume that
$\mathsf{E}\xi_k=v_k$, $\mathsf{E}|\xi_k|=u_k$ and the series
$\sum_{k=1}^\infty\mathsf{P}\{\nu\geq k\}u_k$ converges. Then,
$$
\mathsf{E}\sum_{i=1}^\nu\xi_i=\sum_{k=1}^\infty\mathsf{P}\{\nu=k\}\sum_{i=1}^k
v_i.
$$
\end{lem}

Note, that the condition $\sum_{k=1}^\infty\mathsf{P}\{\nu\geq
k\}u_k<\infty$ of Lemma \ref{lem1} is satisfied, because
$\mathsf{E}N_S<\infty$ and $\mathsf{E}S_k\leq n$ for all $k$. Hence,
by the total expectation formula we have $\mathsf{E}{S_j}\leq
\mathsf{E}Y_1$, and consequently by Lemma \ref{lem1} and the total
expectation formula we arrive at
$\mathsf{E}M_S\leq\mathsf{E}N_S\mathsf{E}Y_1$. We show below, that
in fact we have the strict inequality
$\mathsf{E}M_S<\mathsf{E}N_S\mathsf{E}Y_1$.

 Indeed, the fact that the probability distribution function
$A(x)$ belongs to the class NWUE implies that
$\overline{A}(x)=1-A(x)>0$ for any $x$. Hence, taking into account
that $Y_1$ takes at least two different positive values, one can
conclude that there exists the value $j_0$ such that
$\mathsf{E}\{S_{j_0}|N_S=j_0\}<\mathsf{E}Y_1$, and consequently
$\mathsf{E}{S_{j_0}}<\mathsf{E}Y_1$. This implies
\begin{equation}
\label{eq4}\mathsf{E}M_S<\mathsf{E}N_S\mathsf{E}Y_1.
\end{equation}
Now \eqref{eq1}, \eqref{eq3} and \eqref{eq4} and the equality
$\mathsf{E}M_L=\mathsf{E}M_A-\mathsf{E}M_S$ allows us to obtain the
inequality $\mathsf{E}M_L>\mathsf{E}X_1$.
\end{proof}

\begin{rem}
In the formulation of the Lemma \ref{thm2} we assumed that the
length of a busy period has a finite mean. This assumption is
technically important in order to use Wald's identity. Note, that
the assumption that the interarrival time distribution belongs to
the class NWUE implies $\overline{A}(x)=1-A(x)>0$ for any $x$, which
consequently enables us to conclude that a busy period always exists
(i.e. finite) with probability 1. If the expectation of the busy
period length is infinite, then the expected number of losses during
a busy period is infinite as well, and hence the statement of Lemma
\ref{thm2} in this case remains true.
\end{rem}

\textit{Proof of Theorem \ref{thm3}}. Note first that if $
\frac{1}{a}\mathsf{E}X_1<\frac{1}{b}\mathsf{E}Y_1$, then
$\mathsf{E}M_L$ vanishes as $n\to\infty$ due to the law of large
numbers.
 Hence, the
only case $ \frac{1}{a}\mathsf{E}X_1\geq\frac{1}{b}\mathsf{E}Y_1$ is
available, and this is the assumption in Lemma \ref{thm2}.

Hence, the problem reduces to a minimization problem for
$\mathsf{E}M_L$ in the set of the possible values. More
specifically, the problem is to find the infimum of $\mathsf{E}M_L$
subject to the constraints given by \eqref{eq1}, \eqref{eq3},
\eqref{eq4} and the inequality $
\frac{1}{a}\mathsf{E}X_1\geq\frac{1}{b}\mathsf{E}Y_1$. Then, the
statement of this theorem follows if and only if along with
\eqref{eq1} we also have $a\mathsf{E}N_A-a= b\mathsf{E}N_S$,
$\mathsf{E}M_S=\mathsf{E}N_S\mathsf{E}Y_1$ and $
\frac{1}{a}\mathsf{E}X_1=\frac{1}{b}\mathsf{E}Y_1$. The first
equality follows if and only if arrivals are Poisson, and the second
one follows if and only if $Y_1$ takes a single value $d$, and the
probability distribution function of $X_1$ is lattice with span $d$.
Then, the system of three equations together with the equality
$\mathsf{E}X_1=\frac{ad}{b}$ (which in turn is a consequence of $
\frac{1}{a}\mathsf{E}X_1=\frac{1}{b}\mathsf{E}Y_1$) yields the
desired result $\mathsf{E}M_L=\mathsf{E}X_1$.

\section*{Acknowledgement}
\noindent This paper was written when the author was with the
Electronic Engineering Department in City University of Hong Kong.
The support and hospitality of Professor Moshe Zukerman are highly
appreciated.


\begin{thebibliography}{10}
\bibitem{Abramov 1997}\textsc{Abramov, V.M.} (1997). On a property of a
refusals stream. \emph{Journal of Applied Probability}, \textbf{34},
800-805.

\bibitem{Abramov 2001a}\textsc{Abramov, V.M.} (2001). Inequalities
for the $GI/M/1/n$ loss systems. \emph{Journal of Applied
Probability}, \textbf{38}, 232-234.

\bibitem{Abramov 2001b}\textsc{Abramov, V.M.} (2001). On losses in
$M^X/GI/1/n$ queues. \emph{Journal of Applied Probability},
\textbf{38}, 1079-1080.

\bibitem{Abramov 2006}\textsc{Abramov, V.M.} (2006). Stochastic
inequalities for single-server loss queueing systems.
\emph{Stochastic Analysis and Applications}, \textbf{24}, 1205-1221.



\bibitem{Borovkov 1976}\textsc{Borovkov, A.A.} (1976). \emph{Stochastic
Processes in Queueing Theory}. Springer, Berlin.

\bibitem{Kolmogorov and Prohorov}\textsc{Kolmogorov, A.N. and
Prohorov, Yu.V.} (1949). On sum of a random number of random terms.
\emph{Uspehi Matem. Nauk} (N.S.), \textbf{4}, no.4 (32), 168-172.
(In Russian.)

\bibitem{Pekoz Righter Xia 2003}\textsc{Pek\"oz, E., Righter, R. and
Xia, C.H.} (2003). Characterizing losses in finite buffer systems.
\emph{Journal of Applied Probability}, \textbf{40}, 242-249.

\bibitem{Righter 1999}\textsc{Righter, R.} (1999). A note on losses
in $M/GI/1/n$ queues. \emph{Journal of Applied Probability},
\textbf{36}, 1240-1243.

\bibitem{Wolff book}\textsc{Wolff, R.W.} (1989). \emph{Stochastic
Modeling in the Theory of Queues}. Prentice-Hall, Englewood Cliffs,
NJ.

\bibitem{Wolff 2002}\textsc{Wolff, R.W.} (2002). Losses per cycle in
single-server queues. \emph{Journal of Applied Probability},
\textbf{39}, 905-909.
\end{thebibliography}
\end{document}